\newtheorem{theorem}{Theorem}
\newtheorem{lemma}{Lemma}
\newtheorem{corollary}{Corollary}
\newtheorem{conjecture}{Conjecture}
\newtheorem{observation}{Observation}
\newtheorem{claim}{Claim}
\newcommand{\paths}{\mathcal{P}}
\newcommand{\setZ}{\mathbb{Z}}
\newcommand{\setR}{\mathbb{R}}
\newcommand{\vpn}{{\tt sVPND}\xspace}
\newcommand{\crp}{{\tt PR}\xspace}
\newcommand{\mr}{{\tt MR}\xspace}
\begin{document}

\title{A Short Proof of the VPN Tree Routing Conjecture on Ring Networks}

\author{Fabrizio Grandoni\thanks{Dipartimento di Informatica, Sistemi e Produzione, Universit\`{a} di Roma ``Tor Vergata'', Italy, 
\texttt{grandoni@disp.uniroma2.it}} \and Volker Kaibel\thanks{Fakult\"{a}t f\"{u}r Mathematik, Otto-von-Guericke-Universit\"{a}t Magdeburg, Germany, \texttt{kaibel@ovgu.de}} \and Gianpaolo Oriolo\thanks{Dipartimento di Ingegneria dell'Impresa, Universit\`{a} di Roma ``Tor Vergata'', Italy, 
\texttt{oriolo@disp.uniroma2.it}} \footnote{Corresponding author.} \and Martin Skutella\thanks{Institut f\"{u}r Mathematik, Technische Universit\"{a}t Berlin, Germany, \texttt{skutella@math.tu-berlin.de}}}
\date{\today}

\maketitle

\begin{abstract}
%  The VPN Tree Routing Conjecture states that there always exists an optimal solution to the symmetric Virtual Private Network Design (\vpn) problem where the paths between all terminals form a tree.  Only recently, Hurkens, Keijsper, and Stougie gave a proof of this conjecture for the special case of ring networks.  Their proof is based on a dual pair of linear programs and is somewhat involved.	We present a short proof of a slightly stronger conjecture which might also turn out to be useful for proving the VPN Tree Routing Conjecture for general networks.
Only recently, Hurkens, Keijsper, and Stougie proved the VPN Tree Routing Conjecture for the special case of ring networks.  We present a short proof of a slightly stronger result which might also turn out to be useful for proving the VPN Tree Routing Conjecture for general networks.
\end{abstract}

\paragraph{Keywords:} network design, virtual private networks, tree routing

\section{Introduction}

Consider a communication network which is represented by an undirected graph $G=(V,E)$ %(sometimes $G=(V(G),E(G))$ for sake of clarity) 
with edge costs $c:E\rightarrow\setR_{\geq0}$.  Within this network there is a set of $k$ terminals $W\subseteq V$ which want to communicate with each other.  However, the exact amount of traffic between pairs of terminals is not known in advance.  Instead, each terminal $i\in W$ has an upper bound $b(i)\in\setZ_+$ on the cumulative amount of traffic that terminal $i$ can send or receive.  The general aim is to install capacities on the edges of the graph supporting any possible communication scenario at minimum cost where the cost for installing one unit of capacity on edge $e$ is $c(e)$.

A \emph{set of traffic demands} $D=\{d_{ij}\mid i,j\in W\}$ specifies for each unordered pair of terminals $i,j\in W$ the amount $d_{ij}\in\setR_{\geq0}$ of traffic between $i$ and $j$.  A set $D$ is \emph{valid} if it respects the upper bounds on the traffic of the terminals. That is, (setting $d_{ii}=0$ for all $i\in W$)
\begin{displaymath}
  \sum_{j \in W}d_{ij} \leq b(i)\qquad\text{for all terminals $i\in W$.}
\end{displaymath}

A solution to the \emph{symmetric Virtual Private Network Design} (\vpn) problem defined by $G$, $c$, $W$, and $b$ consists of an $i$-$j$-path $P_{ij}$ in $G$ for each unordered pair $i,j\in W$, and edge capacities $u(e)\geq 0$, $e\in E$.
Such a set of paths $P_{ij}$, $i,j\in W$, together with edge capacities $u(e)$, $e\in E$, is called a \emph{virtual private network}.  A virtual private network is \emph{feasible} if all valid sets of traffic demands $D$ can be routed without exceeding the installed capacities $u$ where all traffic between terminals $i$ and $j$ is routed along path $P_{ij}$, that is, (with $\binom{W}{2}$ denoting the set of cardinality-two subsets of~$W$)
\begin{align*}
  u(e)\geq\sum_{\{i,j\}\in\binom{W}{2}: e\in P_{ij}}d_{ij}\qquad\text{for all edges $e\in E$.}
\end{align*}
A feasible virtual private network is called \emph{optimal} if the total cost of the capacity reservation $\sum_{e\in E}c(e)\,u(e)$ is minimal.

A well-known open question is whether the \vpn problem can be solved efficiently
(i.e., in polynomial time); see Erlebach and R\"{u}egg \cite{ERL} and Italiano, Leonardi, and Oriolo \cite{ILO02}.  
A feasible virtual private network is a \emph{tree solution}
if the subgraph of $G$ induced by the support of $u$ (i.e., edges $e \in E$ with
$u(e)>0$) is a tree.  Gupta, Kleinberg, Kumar, Rastogi, and Yener \cite{GuptaKleinbergKumarRastogiYener01} prove that a tree solution of minimum cost can be obtained in polynomial time by an all-pair shortest paths computation on the network $G$.  In Figure~\ref{fig:tree-reservation} we explain how optimal (i.e., minimal) capacities can be determined for a fixed sub-tree of~$G$ spanning all terminals.
\begin{figure}[tb]
	\centering
		\input{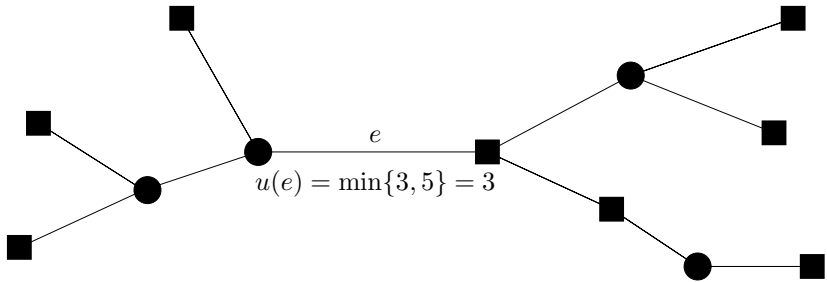}
	\caption{A tree solution for an \vpn instance. The $8$ rectangular nodes are the terminals in $W$, the remaining nodes are non-terminal nodes.  For the sake of simplicity we assume in this instance that $b(j)=1$ for all terminals $j\in W$.  If some edge $e$ is removed from the tree, two connected components remain. The smaller component on the left hand side contains $3$ terminals while the larger component on the right hand side contains $5$ terminals.  Therefore the maximum amount of traffic on edge $e$ is $3$ which occurs when all terminals on the left hand side want to communicate with some terminal on the right hand side.}
	\label{fig:tree-reservation}
\end{figure}
In the following we assume that capacities are chosen accordingly in any tree solution.

Although many different (groups of) researchers are working on the \vpn problem, there is no instance known where a tree solution of minimum cost is not simultaneously an optimal virtual private network.  It is widely believed that the following conjecture holds.

\begin{conjecture}[The VPN Tree Routing Conjecture]
\label{con:0}
For each \vpn instance $(G,c,W,b)$ there exists an optimal virtual private network which is a tree solution.
\end{conjecture}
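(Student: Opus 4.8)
The plan is to attack the full conjecture by first normalising the thresholds and then reducing to a single-hub question that can be handled by averaging. First I would reduce to the \emph{balanced} case $b\equiv 1$: since $b(i)\in\setZ_+$, I replace each terminal $i$ by $b(i)$ co-located unit-threshold copies joined by zero-cost edges. Feasible virtual private networks and tree solutions correspond under this operation, so it suffices to prove the statement when every terminal has unit threshold and $|W|=k$.

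Second, I would pin down the worst-case demand for a fixed path system. For unit thresholds the set of valid demands $D$ is exactly the fractional matching polytope of the complete graph on $W$; since the load $\sum_{\{i,j\}:\,e\in P_{ij}} d_{ij}$ on each edge is linear in $D$, the optimal capacity is attained at an extreme point, i.e.\ $u(e)=\max_{M}\,|\{\{i,j\}\in M : e\in P_{ij}\}|$ where $M$ ranges over the (possibly half-integral) matchings of $W$. Thus the cost of any feasible virtual private network is governed entirely by how its paths interact with matchings of the terminals.

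Third, I would invoke the result of Gupta, Kleinberg, Kumar, Rastogi, and Yener \cite{GuptaKleinbergKumarRastogiYener01}: the cheapest tree solution may be obtained from an all-pairs shortest-paths computation, and can be taken to be a shortest-path tree rooted at a single hub $v\in V$, routing every pair $i,j$ along the concatenation of a shortest $i$-$v$-path and a shortest $v$-$j$-path. Consequently it is enough to exhibit one hub $v$ whose induced tree solution, with capacities $u_v$ read off by the min-cut rule of Figure~\ref{fig:tree-reservation}, costs no more than a fixed optimal \vpn solution with paths $P_{ij}$ and capacities $u$. I would do this probabilistically: draw the hub $v$ from a distribution derived from the optimal worst-case matchings and bound the expected cost $\mathbb{E}_v[\sum_e c(e)\,u_v(e)]$ of the induced tree by $\sum_e c(e)\,u(e)$; if the expectation does not exceed the VPN cost, some hub attains the bound and the conjecture follows. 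The ring result of this paper is exactly the special case in which this averaging can be carried out by hand.

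The main obstacle is this last averaging step. The difficulty is that the worst-case matching certifying $u(e)$ varies from edge to edge, so no single demand pattern witnesses the whole VPN cost; one must instead produce a global distribution over hubs (equivalently, over trees) whose expected induced capacities are dominated by $u$ \emph{simultaneously on every edge}. Finding the correct correlation between the random hub and the matching structure of $\{P_{ij}\}$---so that the edge-by-edge comparison holds at once for all $e\in E$---is precisely where the cycle-specific geometry exploited in this paper no longer applies, and is the crux that keeps the conjecture open for general $G$.
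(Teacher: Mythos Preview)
This statement is Conjecture~\ref{con:0}, which the paper does \emph{not} prove in general; it remains open, and the paper establishes it only for ring networks (Corollary~\ref{cor:vpn-conj}). Your proposal is accordingly a plan of attack rather than a proof, and you yourself flag the averaging step as the unresolved obstacle, so there is no completeness issue to adjudicate---only a comparison of strategies.

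Your reduction to $b\equiv 1$ matches Lemma~\ref{lem:b=1}. After that the paper takes a different route from yours. Instead of working with the matching polytope and a random hub equipped with a shortest-path tree, the paper introduces the \emph{Pyramidal Routing} problem (Section~\ref{sec:PR}) and a new Conjecture~\ref{con:PR}. The crucial difference is in the averaging: the paper's Lemma~\ref{lem:1} averages over \emph{source terminals} $i\in W$, and the objects being compared to the \vpn cost are \crp solutions built from the \emph{given} \vpn paths $\paths_i=\{P_{ij}:j\neq i\}$, not from shortest-path trees rooted at a hub. This yields, for any feasible virtual private network, a terminal $i$ with $\sum_e c(e)\,u(e)\ge\sum_e c(e)\,y(e,\paths_i)$. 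A tree then wins \emph{provided} \crp always has an optimal tree solution---which is exactly Conjecture~\ref{con:PR}. So the paper trades your ``edge-by-edge domination by a random hub tree'' difficulty for a cleaner single-source routing conjecture, and then settles that conjecture on rings by an elementary walk-around-the-cycle argument (Theorem~\ref{theo:PR-ring}).

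The gap you name is genuine for your formulation: the shortest-path tree at a random hub $v$ has no a priori relationship to the given paths $P_{ij}$, so there is no evident distribution on $v$ making $\mathbb{E}_v\bigl[\sum_e c(e)\,u_v(e)\bigr]\le\sum_e c(e)\,u(e)$ hold. The paper's device---keeping the \emph{same} paths while averaging over sources, and deferring the passage to trees into the Pyramidal Routing Conjecture---is precisely what decouples the two difficulties and lets the ring case go through. If you want to push your hub-averaging idea further, the natural step is to recognise that it is subsumed by (and in fact weaker than) proving Conjecture~\ref{con:PR}.
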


The only progress in this direction is due to Hurkens, Keijsper, and Stougie \cite{HKS05} who prove that Conjecture~\ref{con:0} holds on ring networks and for some other special cases of the problem. Their results are based on a linear programming (LP) formulation of a relaxation of the \vpn problem.  In this relaxation several paths may be chosen between each pair of terminals but the fraction of traffic along each of these paths must be fixed, i.e., it may not depend on the actual set of traffic demands.  Hurkens, Keijsper, and Stougie construct solutions to the dual linear program whose cost equals the cost of particular tree solutions.  The details of their proof are somewhat involved.

In the following we present a simpler proof of this result that is based on a new and stronger conjecture which might be of independent interest.  In Section~\ref{sec:preliminaries} we argue that we can restrict to instances with unit communication bounds.  Our new conjecture is presented in Section~\ref{sec:PR}.  Finally we give a short proof for the case of ring networks in Section~\ref{sec:ring}.

\section{Preliminaries}
\label{sec:preliminaries} 

As already observed in \cite{HKS05}, when proving Conjecture~\ref{con:0}, we may assume that $b(i)=1$ for each terminal $i\in W$.  For the sake of self-containedness, we motivate this assumption in the following.  For more details we refer to \cite{HKS05}.

Consider a terminal $i\in W$ with $b(i)\geq 2$; see Figure~\ref{fig:splitting}~(i) for an example.  
\begin{figure}[tb]
	\centering
		\input{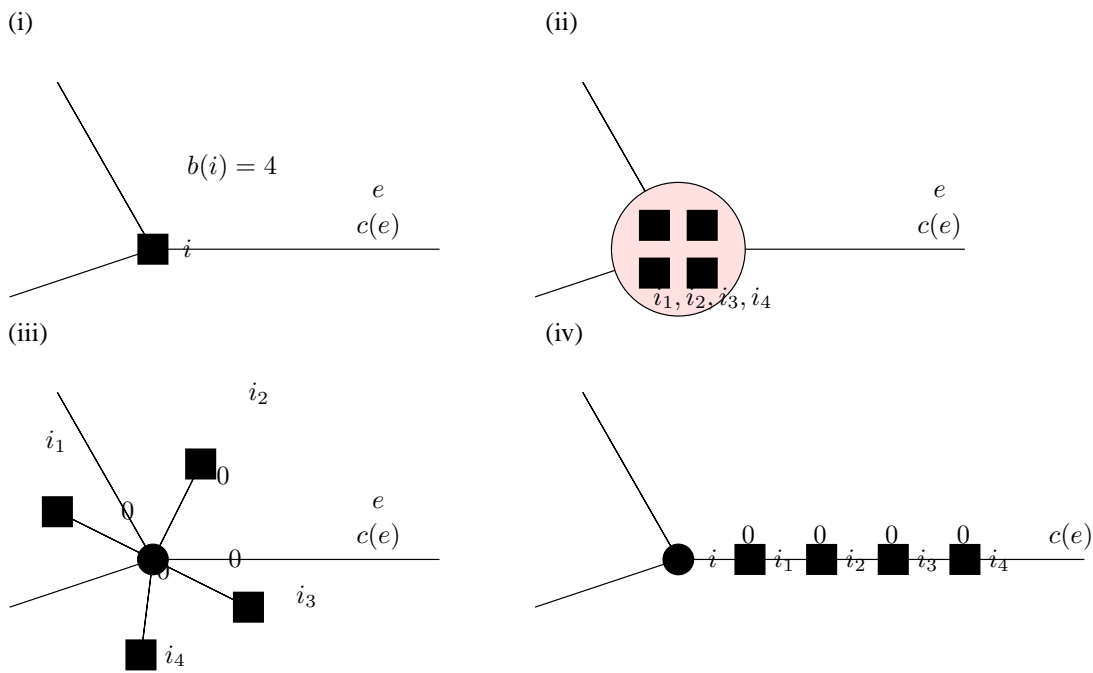}
	\caption{Illustration of the reduction described in Section~\ref{sec:preliminaries}.  As an example, we consider a terminal $i\in W$ with $b(i)=4$ and three incident edges; see part~(i) of the figure.  Part~(ii) depicts the relaxed instance where the terminal at node~$i$ is split into $4$ sub-terminals with unit communication bounds.  In order to get an instance with at most one terminal at every node, we can introduce $4$ new terminal nodes, one for each sub-terminal, and connect them to node~$i$ by edges of zero cost; see part~(iii).  Alternatively, in order to stay closer to the original topology of the network, one can subdivide an edge~$e$ incident to node~$i$ and place the terminals there with zero connection cost to~$i$; see part~(iv).}
	\label{fig:splitting}
\end{figure}
We construct a new \vpn instance by replacing terminal $i$ with $b(i)$ ``\emph{sub-terminals}'' $i_1,\dots,i_{b(i)}$ with $b(i_j)=1$ for $j=1,\dots,b(i)$ that are all collocated at the same node $i\in V$; see Figure~\ref{fig:splitting}~(ii).  Strictly speaking, the collocation of terminals at a node is a slight extension of the original definition of the \vpn problem.  We show how to deal with this below.

Obviously, the new instance is a relaxation of the original instance.  The additional degree of freedom of the new instance is that traffic from terminal $i$ to some other terminal no longer needs to be routed along one fixed path but may be split into $b(i)$ packets of equal size that can be routed along different (but also fixed) paths.  

In order to show that there exists an optimal virtual private network for the original instance which is a tree, it is obviously sufficient to find such an optimal solution to the new instance.

We finally argue that the new instance is itself equivalent to an \vpn instance (in the strict sense) with all $b(i)$'s equal to $1$.  Instead of having $b(i)$ sub-terminals located at node~$i$, we add new terminal nodes $i_1,\dots,i_{b(i)}$ to the network and connect them to node $i$ with edges of cost~$0$; see Figure~\ref{fig:splitting}~(iii).  It is easy to observe that every feasible (tree) solution to the instance in Figure~\ref{fig:splitting}~(ii) naturally corresponds to a feasible (tree) solution to the instance in Figure~\ref{fig:splitting}~(iii) of the same cost and vice versa. 

The only problem with the described approach is that the resulting network no longer has the same topology as the network of the original instance.  For example, if we start with a ring network, the resulting network in Figure~\ref{fig:splitting}~(iii) is not a ring.  This problem can be resolved by using the alternative construction illustrated in Figure~\ref{fig:splitting}~(iv).  Here we subdivide an arbitrary edge~$e$ incident to node~$i$ into~$b(i)+1$ parts by introducing~$b(i)$ new terminal nodes $i_1,\dots,i_{b(i)}$. The first $b(i)$ edges connecting the new terminal nodes to node~$i$ have cost~$0$.  To the last edge we assign cost~$c(e)$.  Again, it is easy to observe that every feasible solution to the instance in Figure~\ref{fig:splitting}~(ii) naturally corresponds to a feasible solution to the instance in Figure~\ref{fig:splitting}~(iv) of the same cost and vice versa.  

In order to argue that also tree solutions correspond to each other, one has to be a little more careful.  Notice that not every tree solution to the instance in Figure~\ref{fig:splitting}~(iv) induces a tree solution to the instance in Figure~\ref{fig:splitting}~(ii) (e.g., if terminals~$i_1$ and~$i_2$ are leaves of the tree).  But  there always exists an \emph{optimal} tree solution to the instance in Figure~\ref{fig:splitting}~(iv) containing all edges $i_1i_2,\dots,i_{b(i)-1}i_{b(i)}$.  This follows from the fact that these edges have cost~$0$ and an optimal tree solution is a shortest-paths tree for some source node~$j\in V$; see~\cite{GuptaKleinbergKumarRastogiYener01}.

We conclude this section with the following lemma resulting from our considerations above.

\begin{lemma}\label{lem:b=1}
  Consider a class of networks which is closed under the operation of subdividing edges by introducing new nodes of degree~$2$.  Then Conjecture~\ref{con:0} holds for this class of networks if and only if it holds for the restricted class of instances on such networks where~$b(i)=1$ for all~$i\in W$.
\end{lemma}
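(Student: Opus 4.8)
The \emph{only if} implication needs no argument, since the restricted instances form a subclass of all instances on the networks in question. For the \emph{if} implication I would assume that Conjecture~\ref{con:0} holds for every unit-bound instance on a network in the class, fix an arbitrary instance $(G,c,W,b)$ with $G$ in the class, and turn the reduction sketched above into a single chain of comparisons of optimal costs that ends with an explicit optimal tree solution for $(G,c,W,b)$.

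Concretely, I would first pass from $(G,c,W,b)$ to the relaxed instance of Figure~\ref{fig:splitting}~(ii), obtained by splitting each terminal $i$ with $b(i)\ge 2$ into $b(i)$ unit-bound sub-terminals collocated at node~$i$; since this merely enlarges the set of admissible virtual private networks, the optimal cost of~(ii) is at most that of $(G,c,W,b)$. Next I would pass to the instance of Figure~\ref{fig:splitting}~(iv), in which, for each split terminal, one incident edge is subdivided so as to hang the sub-terminals on a pendant chain of zero-cost edges. This instance has all communication bounds equal to~$1$, and its network arises from~$G$ by a sequence of subdivisions at new degree-two nodes, so it belongs to the class; by hypothesis it therefore admits an optimal virtual private network that is a tree solution. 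By the shortest-paths-tree structure of optimal tree solutions (see~\cite{GuptaKleinbergKumarRastogiYener01}), this tree may be chosen to contain all the zero-cost chain edges, so that, via the cost-preserving correspondence between solutions of~(iv) and~(ii) described above, it descends to a tree solution of~(ii) whose cost equals the optimal cost of~(iv), and hence the optimal cost of~(ii).

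It then remains to read such a tree solution of~(ii) as a virtual private network for $(G,c,W,b)$: its support is a subtree~$T$ of~$G$, and I would route the traffic between $i$ and $j$ along the unique $i$-$j$ path in~$T$. For each edge $e\in T$, removing~$e$ from~$T$ partitions the terminals of $(G,c,W,b)$ into two parts, and since all $b(i)$ sub-terminals of a terminal~$i$ sit at the single node~$i$, the sum of the bounds over either part equals the number of sub-terminals it contains; hence the minimum of the two partition sums --- that is, the capacity the tree formula assigns to~$e$ --- coincides with the capacity of~$e$ in~(ii). So this feasible tree solution for $(G,c,W,b)$ has cost equal to the optimal cost of~(ii), which is at most the optimal cost of $(G,c,W,b)$; it is therefore optimal, which proves the lemma.

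Everything here except one step is bookkeeping: the (in)equality chain $\mathrm{OPT}_{\mathrm{(iv)}}=\mathrm{OPT}_{\mathrm{(ii)}}\le\mathrm{OPT}_{(G,c,W,b)}$ together with the feasibility of the constructed network. The step I expect to need the most care --- and which the preceding discussion has essentially already carried out --- is the passage through the instance of Figure~\ref{fig:splitting}~(iv): one must check that the unit-bound tree solution supplied by the hypothesis can be normalized so as to keep the zero-cost chains intact, and only then does it legitimately descend to a tree solution of~(ii) of the same cost.
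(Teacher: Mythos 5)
Your argument is correct and follows essentially the same route as the paper's own justification: relax to collocated unit-bound sub-terminals, realize this as a genuine unit-bound instance in the class via the edge-subdivision construction of Figure~\ref{fig:splitting}~(iv), normalize the resulting optimal tree solution to contain the zero-cost chain edges using the shortest-paths-tree characterization of~\cite{GuptaKleinbergKumarRastogiYener01}, and descend back to the original instance. You correctly isolate the one delicate step (keeping the zero-cost chains in the tree) that the paper also singles out, so nothing further is needed.
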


\section{The Pyramidal Routing Problem}
\label{sec:PR}

Consider again a tree solution to an \vpn instance as depicted in Figure~\ref{fig:tree-reservation} with $k=|W|$ terminals and $b(i)=1$ for all terminals~$i\in W$.  For a fixed terminal~$i\in W$, let~$\paths_i$ denote the set of simple paths $P_{ij}$ in the tree that connect $i$ to the remaining terminals $j\in W\setminus\{i\}$.  Notice that the required capacity of edge $e$ can be written as
\begin{align*}
	u(e)=\min\{n(e,\paths_i),k-n(e,\paths_i)\}
\end{align*}
where $n(e,\paths_i)$ is the number of paths in $\paths_i$ containing edge~$e$, that is,
\begin{align}
  n(e,\paths_i)&:=|\{j\in W\setminus\{i\}\mid e\in P_{ij}\}|\enspace.\label{eq:n}	
\intertext{To simplify notation in the following we let}
  y(e,\paths_i)&:=\min\{n(e,\paths_i),k-n(e,\paths_i)\}\enspace;\label{eq:y}
\end{align}
see also Figure~\ref{fig:concave_cost}.
\begin{figure}[tb]
	\centering
		\input{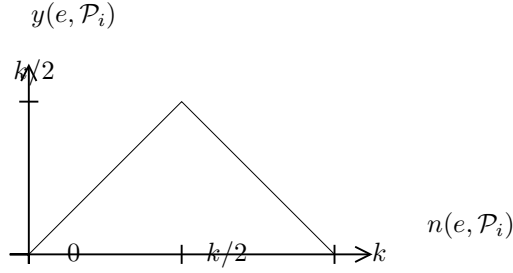}
	\caption{The concave cost function of the Pyramidal Routing Problem.}
	\label{fig:concave_cost}
\end{figure}

We now introduce another problem that we call \emph{Pyramidal Routing} (\crp) problem. The problem is defined by $G$, $c$, $W$---as above for the \vpn problem---and a \emph{single source terminal} $i\in W$.  A solution to the Pyramidal Routing Problem consists of a set $\paths_i$ of \emph{simple} $i$-$j$-paths $P_{ij}$, one path for each terminal $j\in W\setminus\{i\}$.  As above we denote the number of paths in $\paths_i$ containing a fixed edge $e\in E$ by $n(e,\paths_i)$; see \eqref{eq:n}.  Moreover, we define $y(e,\paths_i)$ accordingly as in \eqref{eq:y}. The \crp problem is to find a solution $\paths_i$ that minimizes the objective function
\begin{displaymath}
  \sum_{e\in E} c(e)\,y(e,\paths_i)\enspace.
\end{displaymath}

The \crp problem can be seen as an unsplittable flow problem with concave cost functions on the edges (see Figure~\ref{fig:concave_cost}) where one unit of flow is sent from the source~$i$ to all destinations~$j\in W\setminus\{i\}$.  A \emph{tree solution} to \crp is a solution where the chosen paths $P_{ij}$, $j\in W\setminus\{i\}$, form a tree.

\begin{lemma}\label{lem:tree-solutions}
  Consider an \vpn instance $(G,c,W,b)$ with~$b_j=1$ for all~$j\in W$ and a corresponding \crp instance $(G,c,W,i)$ for some terminal $i\in W$.  Then any tree solution to the \vpn instance yields a tree solution of the same cost to the \crp instance and vice versa.
\end{lemma}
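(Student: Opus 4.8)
The plan is to show that, on both sides, a tree solution is essentially the same object --- a subtree $T$ of $G$ together with the (unique) paths it induces between the relevant pairs of terminals --- and that the value attached to this object is the same whether it is read as an \vpn tree solution or as a \crp tree solution. Concretely I would first normalize the shape of a tree solution on each side, then prove one edge-by-edge identity relating $u(e)$ and $y(e,\paths_i)$, and finally check that the obvious forth-and-back constructions stay inside the class of tree solutions.

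First I would pin down the shape of a tree solution. On the \vpn side the support of $u$ is, by definition, a tree $T$; since $b\equiv 1$, the valid demand set in which a single pair $\{p,q\}$ exchanges one unit of traffic must be routable, which forces $P_{pq}\subseteq T$, and, being a simple path, $P_{pq}$ is then the unique $p$-$q$-path of $T$. Moreover every edge of $T$ has at least one terminal on each side (otherwise its capacity, hence its membership in the support, would be $0$), so all leaves of $T$ are terminals. On the \crp side a tree solution is, by definition, a family of simple $i$-$j$-paths whose union $T$ is a tree, so again each $P_{ij}$ is the unique $i$-$j$-path of $T$, and every edge of $T$ lies on at least one of the $k-1$ chosen paths.

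The key computation is the following. Fix a tree $T$ spanning $W$ whose leaves are terminals, and an edge $e\in T$; deleting $e$ splits $T$ into components $A_e$ and $B_e$, and say $i\in B_e$. For $j\in W\setminus\{i\}$ the $i$-$j$-path of $T$ uses $e$ if and only if $j\in A_e$, so $n(e,\paths_i)=|A_e\cap W|$ and therefore
\begin{displaymath}
  y(e,\paths_i)=\min\bigl\{|A_e\cap W|,\,k-|A_e\cap W|\bigr\}=\min\bigl\{|A_e\cap W|,\,|B_e\cap W|\bigr\},
\end{displaymath}
which is exactly the capacity $u(e)$ that the \vpn tree solution on $T$ installs on $e$ (cf.\ Figure~\ref{fig:tree-reservation}); for $e\notin T$ both quantities are $0$. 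Notice this value does not depend on the choice of the source $i$, and summing against $c$ gives $\sum_{e\in E}c(e)\,u(e)=\sum_{e\in E}c(e)\,y(e,\paths_i)$.

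It then remains to check that the two constructions land in the right class. Given an \vpn tree solution with tree $T$, keep only the paths $P_{ij}$, $j\in W\setminus\{i\}$: since every component of $T-e$ contains a leaf of $T$, i.e.\ a terminal, each edge of $T$ still lies on some surviving path, so these $k-1$ paths have union $T$ and form a \crp tree solution, of equal cost by the identity above. Conversely, given a \crp tree solution with tree $T$, route every pair $\{p,q\}\in\binom{W}{2}$ along the unique $p$-$q$-path of $T$ and set $u(e):=y(e,\paths_i)$; since $1\le n(e,\paths_i)\le k-1$ for each $e\in T$ we get $u(e)>0$ precisely on $T$, so the support of $u$ is the tree $T$, the network is feasible with these (minimal) capacities by the standard tree-routing bound, and its cost equals the \crp objective. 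I expect no genuine obstacle here: the heart of the matter is simply recognizing $n(e,\paths_i)$ as the number of terminals on the far side of $e$, and the only points needing a line of care are exactly the two just mentioned --- that discarding the non-source paths does not shrink $T$, and that no edge of the \crp tree carries zero \vpn capacity.
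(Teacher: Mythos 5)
Your proof is correct and follows essentially the same route as the paper's: both identify a tree solution on either side with a subtree spanning the terminals and observe that the induced capacity of an edge $e$ equals $\min\{n(e,\paths_i),k-n(e,\paths_i)\}$, i.e.\ the number of terminals on the smaller side of $T-e$, in both readings. The paper states this in three lines; your additional checks (that dropping the non-source paths does not shrink the tree, and that every tree edge gets positive capacity) are sound but left implicit there.
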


\begin{proof}
  Consider any subtree of $G$ containing all terminals. The induced set of $i$-$j$-paths, $j\in W\setminus\{i\}$, is $\paths_i$.   For the \vpn problem, the required capacity of some tree edge $e$ is equal to $y(e,\paths_i)=\min\{n(e,\paths_i),k-n(e,\paths_i)\}$.  Therefore the cost of this tree solution is equal to $\sum_{e\in E}c(e)\,y(e,\paths_i)$ for both problems.  
\end{proof}

We state an immediate corollary of the last lemma.

\begin{corollary}\label{cor:PR-trees}
  A tree solution of minimum cost to a \crp instance $(G,c,W,i)$ yields simultaneously a tree solution of (the same) minimum cost to the \crp instances $(G,c,W,j)$ for all~$j\in W$.  In particular, $y(e,\paths_i)=y(e,\paths_j)$ where $\paths_i$ and $\paths_j$ denote the two sets of paths connecting terminal $i$ and $j$, respectively, with all other terminals.
\end{corollary}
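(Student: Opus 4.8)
The plan is to observe that Corollary~\ref{cor:PR-trees} is a direct consequence of Lemma~\ref{lem:tree-solutions} together with the fact that the cost of a tree solution is defined independently of the chosen source terminal. Concretely, fix a \crp instance $(G,c,W,i)$ and let $T$ be a subtree of $G$ spanning all terminals that realizes a minimum-cost tree solution to this instance; let $\paths_i$ be the induced set of $i$-$j$-paths. For any other terminal $j\in W$, the same tree $T$ induces a set $\paths_j$ of $j$-$\ell$-paths ($\ell\in W\setminus\{j\}$), so $T$ is also a (not necessarily optimal a priori) tree solution to $(G,c,W,j)$.

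The key step is to argue that the cost $\sum_{e\in E}c(e)\,y(e,\paths_i)$ of $T$ as a \crp solution does not depend on which source terminal we picked. Here I would invoke Lemma~\ref{lem:tree-solutions}: applied to the underlying \vpn instance $(G,c,W,b)$ with unit bounds and to the source $i$, it tells us that the \crp-cost of $T$ equals its \vpn-cost; applied with the source $j$, it tells us the \crp-cost of $T$ with respect to $\paths_j$ also equals its \vpn-cost. Since the \vpn-cost of the fixed tree $T$ is one and the same number in both cases (the required capacity on each edge $e$ is determined solely by the sizes of the two components of $T-e$, not by any distinguished terminal), we conclude $\sum_{e\in E}c(e)\,y(e,\paths_i)=\sum_{e\in E}c(e)\,y(e,\paths_j)$. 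In fact the equality holds edge by edge: removing $e$ from $T$ splits $W$ into a part of size $s$ and a part of size $k-s$, and regardless of whether $i$ or $j$ is the source, exactly $\min\{s,k-s\}$ of the paths in $\paths_i$ (resp.\ $\paths_j$) cross $e$, so $y(e,\paths_i)=\min\{s,k-s\}=y(e,\paths_j)$ for every $e\in T$ (and both are $0$ off $T$). This proves the ``in particular'' clause.

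It remains to see that $T$ is actually \emph{optimal} among tree solutions to $(G,c,W,j)$. Suppose some spanning subtree $T'$ of $G$ gave a strictly cheaper tree solution to $(G,c,W,j)$. By the source-independence just established (applied to $T'$ with sources $j$ and $i$), $T'$ would then also be a strictly cheaper tree solution to $(G,c,W,i)$, contradicting the minimality of $T$. Hence $T$ is a minimum-cost tree solution for every source $j\in W$, and it has the same cost for each of them.

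I do not expect a genuine obstacle here: the statement is essentially a bookkeeping consequence of the symmetry of the \vpn capacity formula $u(e)=\min\{n(e,\paths_i),k-n(e,\paths_i)\}$ in the terminals, which Lemma~\ref{lem:tree-solutions} has already packaged. The only point requiring a word of care is the per-edge claim $y(e,\paths_i)=y(e,\paths_j)$: one should note that for a \emph{tree} the number of source-$i$ paths using a tree edge $e$ equals the size of the smaller side of the cut $T-e$ precisely when that smaller side does not contain $i$, and equals the size of the larger side otherwise — but $\min\{s,k-s\}$ absorbs this case distinction, which is exactly why the value is the same for all sources.
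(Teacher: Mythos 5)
Your argument is correct and is exactly the reasoning the paper has in mind: the corollary is stated there as an immediate consequence of Lemma~\ref{lem:tree-solutions}, relying on the fact that the capacity $\min\{s,k-s\}$ of a tree edge depends only on the cut $T-e$ and not on the chosen source terminal. Your write-up simply makes explicit the per-edge identity and the transfer of optimality that the paper leaves implicit.
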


We state the following conjecture for the Pyramidal Routing problem.

\begin{conjecture}[The Pyramidal Routing Conjecture]\label{con:PR}
  For each \crp instance $(G,c,W,i)$ there exists an optimal solution which is a tree solution.
\end{conjecture}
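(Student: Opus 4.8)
The plan is to recast \crp as a \emph{single-source concave-cost flow problem} and then exploit the fact that a concave function attains its minimum over a polytope at a vertex. Fix the source terminal $i$ and build the polytope $Q$ of nonnegative flows $f$ on $G$ having supply $k-1$ at $i$, unit demand at every other terminal $j\in W\setminus\{i\}$, conservation at all remaining nodes, and the upper bound $f_e\le k-1$ on each edge (this bound is harmless, since $0\le n(e,\paths_i)\le k-1$ for every \crp solution, and it keeps $Q$ bounded). With $g(x):=\min\{x,k-x\}$, the objective $\sum_{e\in E}c(e)\,g(f_e)$ is concave, because $g$ is concave on $[0,k]$ -- this is precisely the cost curve of Figure~\ref{fig:concave_cost} -- and $c\ge 0$. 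As a flow polytope with integral data, $Q$ is integral, and an optimal vertex has acyclic support: a cycle in the support would admit a feasible $\pm\varepsilon$ circulation, contradicting extremality. A nonnegative acyclic flow pushing $k-1$ units out of $i$ to the $k-1$ sinks is supported on a spanning arborescence rooted at $i$, so the concave minimum over $Q$ is attained at a tree. On such a tree $f_e$ equals the number of terminals separated from $i$ by $e$, whence $g(f_e)=y(e,\paths_i)$, and the flow-optimum is realized by a genuine \crp tree solution.

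It then remains to show that this flow relaxation has the same optimal value as \crp, which reduces the conjecture to a single inequality. One direction is immediate from Lemma~\ref{lem:tree-solutions} and the paragraph above: a tree solution yields a feasible $f\in Q$ of equal cost, so the flow-optimum is at most the best \crp tree cost, which is itself at least the \crp optimum. The substance is the reverse inequality, namely that the optimal \crp value is at least $\min_{f\in Q}\sum_{e\in E}c(e)\,g(f_e)$. Given any \crp solution $\paths_i$, orient each path $P_{ij}$ from $i$ to $j$ and let $f$ be the resulting aggregate (net) flow; then $f\in Q$, and one would like the edgewise bound $g(f_e)\le y(e,\paths_i)$, i.e.\ that passing from the undirected path-count $n(e,\paths_i)$ to the net flow value can only lower the tent cost.

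Here lies the main obstacle, and it is exactly what makes Conjecture~\ref{con:PR} hard rather than routine. The tent function $g$ is concave but \emph{not monotone}: it rises up to $k/2$ and then \emph{falls}. On the increasing branch consolidation and cancellation behave perfectly, since concavity together with $g(0)=0$ gives subadditivity, so bundling never hurts -- this is the classical single-sink reason tree solutions are optimal for monotone concave costs~\cite{GuptaKleinbergKumarRastogiYener01}. But the decreasing branch means that an edge already carrying more than $k/2$ paths becomes \emph{cheaper} per additional path, so a routing that deliberately splits flow toward a common group of destinations, or that uses an edge in both directions, could a priori exploit this regime; for such configurations cancellation may send $n(e,\paths_i)$ from above $k/2$ to an $f_e$ below $k/2$, giving $g(f_e)>y(e,\paths_i)$, and the desired edgewise inequality fails.

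The plan is therefore to replace the naive edgewise comparison by a global argument in two stages. First, a normalization lemma showing that \emph{some} optimal $\paths_i$ uses no edge in two opposite directions and induces an acyclic support, so that $n(e,\paths_i)=f_e$ and the arborescence of the first paragraph is recovered. Second, a certificate of global optimality -- ideally a packing of cuts (cocycles) dual to the concave tent cost -- witnessing that this arborescence beats \emph{every} path routing, not merely every tree. Constructing such a certificate on arbitrary graphs is precisely the step that Hurkens, Keijsper, and Stougie~\cite{HKS05} carry out only through an intricate linear-programming duality specialized to rings; pushing a matching dual beyond rings is where I expect the difficulty to concentrate, and a successful construction of such a global certificate is what would upgrade this plan into a full proof of the general Pyramidal Routing Conjecture.
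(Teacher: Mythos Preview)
The statement you attempt is Conjecture~\ref{con:PR}, and the paper does \emph{not} prove it in general: it is stated as an open conjecture, with only the ring case settled (Theorem~\ref{theo:PR-ring}). So there is no proof here to compare against, and your write-up is accordingly a plan rather than a proof --- you say as much in your last sentence. The obstruction you isolate is the right one: the tent $g(x)=\min\{x,k-x\}$ is concave but not monotone, so the edgewise bound $g(f_e)\le y(e,\paths_i)$ between net flow and path count genuinely fails, and with it the naive reduction to a single-source concave-cost flow.

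Two remarks on the plan itself. First, once you add the upper bounds $f_e\le k-1$ to make $Q$ bounded, vertices of $Q$ need not have acyclic support: the standard fact is only that the \emph{free} arcs (those strictly between their bounds) form a forest, while arcs sitting at the upper bound may close cycles, so your one-line ``$\pm\varepsilon$ circulation'' argument does not dispose of that case without further work. Second, your two ``stages'' are not cumulative. A normalization lemma asserting that \emph{some optimal} $\paths_i$ has acyclic support already says that some optimal \crp solution is a tree --- which \emph{is} the conjecture --- so the dual cut-packing certificate would be an alternative route, not a second step on top of normalization. For comparison, the paper's ring argument bypasses both flow polytopes and LP duality: Claims~\ref{claim1} and~\ref{claim2} show that $y$-values on adjacent edges of the cycle differ by exactly~$1$, forcing $y(e_\ell,\paths_i)\ge|\tfrac{k}{2}-\ell|$ for a suitable numbering, a bound that the tree obtained by deleting the ``middle'' edge meets with equality.
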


As we show in the remainder of this section, Conjecture~\ref{con:PR} is strongly related to Conjecture~\ref{con:0}.

\begin{theorem}\label{thm:equivalent-conj}
  If Conjecture~\ref{con:PR} holds on some class of networks, which is closed under the operation of subdividing edges by introducing new nodes of degree~$2$, then Conjecture~\ref{con:0} holds on the same class.
\end{theorem}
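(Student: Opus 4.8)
The plan is to reduce to the case of unit communication bounds, invoke Conjecture~\ref{con:PR} for a single source terminal, and then certify optimality of the resulting tree against an arbitrary feasible virtual private network by averaging over the choice of source. Let $\mathcal{C}$ be the given class of networks. Since $\mathcal{C}$ is closed under subdividing edges, Lemma~\ref{lem:b=1} reduces Conjecture~\ref{con:0} on $\mathcal{C}$ to instances $(G,c,W)$ with $G\in\mathcal{C}$ and $b(j)=1$ for all $j\in W$. Fix such an instance, put $k=|W|$, pick an arbitrary terminal $i\in W$, and apply Conjecture~\ref{con:PR} to the \crp instance $(G,c,W,i)$: it yields an optimal \crp solution that is a tree $T$, of cost $C:=\sum_{e\in E}c(e)\,y(e,\paths_i)$ where $\paths_i$ is the path system induced by $T$. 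By Lemma~\ref{lem:tree-solutions}, $T$ is simultaneously a feasible (tree) virtual private network of cost $C$. Hence it suffices to show that every feasible virtual private network costs at least $C$; this makes $T$ an optimal virtual private network which happens to be a tree solution, i.e., Conjecture~\ref{con:0} for this instance.

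So let $(Q,u)$ be any feasible virtual private network, with routing paths $Q=(Q_{jl})_{\{j,l\}\in\binom{W}{2}}$ (which we may assume simple, as shortcutting a path never increases a required capacity) and capacities~$u$. For an edge $e\in E$ let $H_e$ be the graph on vertex set $W$ whose edges are the pairs $\{j,l\}$ with $e\in Q_{jl}$. Routing a maximum fractional matching of $H_e$ as a valid set of traffic demands shows $u(e)\ge\nu^*(H_e)$, the fractional matching number. On the other hand, for each terminal $j\in W$ the paths $\{Q_{jl}\mid l\in W\setminus\{j\}\}$ form a (generally non-tree) \crp solution for source~$j$; by Corollary~\ref{cor:PR-trees} the tree $T$ is a minimum-cost \emph{tree} solution for source~$j$ as well, of cost $C$, and Conjecture~\ref{con:PR} applied with source~$j$ makes $T$ optimal among \emph{all} \crp solutions for source~$j$. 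Since $n(e,\{Q_{jl}\}_l)=\deg_{H_e}(j)$, this gives $C\le\sum_{e\in E}c(e)\min\{\deg_{H_e}(j),\,k-\deg_{H_e}(j)\}$ for every $j\in W$. Averaging over all $k$ choices of~$j$,
\begin{equation*}
	C\;\le\;\sum_{e\in E}c(e)\cdot\frac1k\sum_{j\in W}\min\{\deg_{H_e}(j),\,k-\deg_{H_e}(j)\},
\end{equation*}
so everything reduces to the combinatorial inequality
\begin{equation}\label{eq:keycomb}
	\sum_{v\in V(H)}\min\{\deg_H(v),\,n-\deg_H(v)\}\;\le\;n\cdot\nu^*(H)\qquad\text{for every $n$-vertex graph $H$,}
\end{equation}
applied with $H=H_e$ and $n=k$ together with $\nu^*(H_e)\le u(e)$.

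Inequality~\eqref{eq:keycomb} is the crux and the step I expect to demand the most care. I would prove it by rewriting both sides: $\min\{a,n-a\}=\tfrac12(n-|n-2a|)$, while $\nu^*(H)=\tfrac12(n-\delta(H))$ with $\delta(H):=\max_{I\,\text{independent}}(|I|-|N(I)|)$, this last identity following from LP duality and half-integrality of the fractional vertex cover polytope. Then \eqref{eq:keycomb} is equivalent to $\sum_{v\in V(H)}|n-2\deg_H(v)|\ge n\,\delta(H)$. Choose an independent set $I$ attaining $\delta(H)$ and containing every isolated vertex of $H$; each isolated vertex contributes exactly~$n$ to the left-hand side. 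Writing $I'=\{v\in I\mid\deg_H(v)\ge1\}$, $N=N(I')$, $p=|I'|$, $q=|N|$, we have $p+q\le n$; if $p\le q$ there is nothing left to prove, and if $p>q$ then $q<n/2$, so every $v\in I'$ has $\deg_H(v)\le q<n/2$ and hence $\sum_{v\in I'}|n-2\deg_H(v)|=pn-2\,e(I',N)$ (using that $I'$ is independent with neighbourhood $N$, so $e(I',N)$ is the number of $I'$–$N$ edges), while $|n-2d|\ge 2d-n$ and $\sum_{v\in N}\deg_H(v)\ge e(I',N)$ give $\sum_{v\in N}|n-2\deg_H(v)|\ge 2\,e(I',N)-qn$. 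Summing these two bounds the terms $2\,e(I',N)$ cancel and leave $n(p-q)$, which together with the $n$ per isolated vertex accounts for all of $n\,\delta(H)$. Apart from this estimate, the only mildly delicate points are the reduction to unit bounds (already carried out in Section~\ref{sec:preliminaries}) and the passage from $(Q,u)$ to simple paths, both of which are routine.
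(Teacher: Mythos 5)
Your proof is correct, but it reaches the crucial lower bound by a genuinely different route than the paper. The paper's proof rests on Lemma~\ref{lem:1}: for each edge $e$ it writes down an \emph{explicit} valid traffic matrix $D^e$ whose load on $e$ is $\frac{1}{k}\sum_{j\in W}y(e,\paths_j)$, concludes $u(e)\geq\frac{1}{k}\sum_{j}y(e,\paths_j)$, takes the minimum over terminals, and then invokes Conjecture~\ref{con:PR} \emph{once}, for that single distinguished terminal. You instead bound $u(e)\geq\nu^*(H_e)$ by the fractional matching number of the conflict graph $H_e$, reduce everything to the combinatorial inequality $\sum_{v}\min\{\deg_H(v),n-\deg_H(v)\}\leq n\,\nu^*(H)$, prove that via the fractional Berge--Tutte/Gallai--Edmonds formula $\nu^*(H)=\frac12\bigl(n-\max_{I\ \mathrm{indep}}(|I|-|N(I)|)\bigr)$ plus a deficiency-set count, and then apply the conjecture for \emph{every} source (using Corollary~\ref{cor:PR-trees} to equate the tree optima) before averaging. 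These are primal and dual views of the same fact: the paper's demand matrix $D^e$ is exactly a fractional matching of $H_e$ of value $\frac1k\sum_{j}\min\{\deg_{H_e}(j),k-\deg_{H_e}(j)\}$, i.e., an explicit certificate for your inequality, so your key combinatorial lemma is true and your deficiency argument for it checks out. What the paper's route buys is brevity and self-containment --- no LP duality or half-integrality is imported, and the conjecture is used only once; what your route buys is a clean, reusable graph-theoretic statement and the observation that $\nu^*(H_e)$ is precisely the best edge-wise hose-model lower bound on $u(e)$. The only soft spot in your write-up is that the identity $\nu^*(H)=\frac12(n-\delta(H))$ is asserted rather than proved; it is standard, but if you want the argument self-contained you would need to include the half-integrality derivation, at which point the paper's direct construction of $D^e$ is the shorter path.
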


Before we can prove this theorem we need one further result.  The following lemma and its proof are a slight extension of Theorem 3.2 in~\cite{GuptaKleinbergKumarRastogiYener01}.

\begin{lemma}\label{lem:1}
  Consider an \vpn instance $(G,c,W,b)$ with $b(j)=1$ for all $j\in W$ and some feasible virtual private network given by simple $i$-$j$-paths $P_{ij}$, $i,j\in W$, and capacities $u(e)$, $e\in E$.  There exists a terminal $i\in W$ such that $\sum_{e\in E} c(e)\,u(e)\geq\sum_{e\in E}c(e)\,y(e,\paths_i)$, where $\paths_i=\{P_{ij}\mid j\in W\setminus\{i\}\}$.
\end{lemma}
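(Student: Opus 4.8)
The plan is to prove the following stronger ``edgewise'' inequality, from which the lemma follows by a simple averaging argument:
\[
  k\,u(e)\;\ge\;\sum_{v\in W}y(e,\paths_v)\qquad\text{for every }e\in E,
\]
where $\paths_v=\{P_{vj}\mid j\in W\setminus\{v\}\}$ is the set of paths of the given network emanating from $v$. Indeed, multiplying by $c(e)\ge 0$ and summing over $e$ yields $k\sum_{e}c(e)u(e)\ge\sum_{v\in W}\bigl(\sum_{e}c(e)y(e,\paths_v)\bigr)$, so the average over $v\in W$ of $\sum_{e}c(e)y(e,\paths_v)$ does not exceed $\sum_{e}c(e)u(e)$; hence any terminal $i$ minimizing $\sum_{e}c(e)y(e,\paths_i)$ satisfies the claim. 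This is, in spirit, the argument behind Theorem~3.2 of \cite{GuptaKleinbergKumarRastogiYener01}, but with the concave cost $y(e,\cdot)=\min\{\,\cdot\,,k-\cdot\}$ rather than a single branch.

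To prove the edgewise inequality, fix $e$ and introduce the ``conflict graph'' $H_e=(W,F_e)$ with $F_e=\{\{i,j\}\mid e\in P_{ij}\}$. By definition the degree of $v$ in $H_e$ equals $d_v:=n(e,\paths_v)$, so $y(e,\paths_v)=\min\{d_v,k-d_v\}$. Since the given virtual private network is feasible and $b(j)=1$ for all $j$, we have $u(e)\ge\sum_{\{i,j\}\in F_e}d_{ij}$ for every valid set of demands $D$; taking $D$ to be an optimal fractional matching of $H_e$ extended by zeros on $\binom{W}{2}\setminus F_e$ (which is valid), we get $u(e)\ge\nu_f(H_e)$, the fractional matching number of $H_e$. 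By LP duality, $\nu_f(H_e)$ equals the minimum fractional vertex cover value of $H_e$, so we may fix $z\colon W\to\setR_{\ge0}$ with $z_i+z_j\ge 1$ for all $\{i,j\}\in F_e$ and $\sum_v z_v=\nu_f(H_e)\le u(e)$; replacing each $z_v$ by $\min\{z_v,1\}$ keeps this a feasible cover and does not increase its value, so we may assume $0\le z_v\le 1$.

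Now estimate directly. Using $\min\{d_v,k-d_v\}=d_v-\max\{0,2d_v-k\}$ and $\sum_v d_v=2|F_e|$ gives $\sum_{v}y(e,\paths_v)=2|F_e|-\sum_{v:\,2d_v>k}(2d_v-k)$. Because $0\le z_v\le1$: for $v$ with $2d_v>k$ we have $z_v(2d_v-k)\le 2d_v-k$, and for all other $v$ we have $z_v(2d_v-k)\le 0$; summing, $\sum_{v}z_v(2d_v-k)\le\sum_{v:\,2d_v>k}(2d_v-k)$. Hence
\[
  k\,u(e)-\sum_{v}y(e,\paths_v)\;\ge\;k\sum_{v}z_v-2|F_e|+\sum_{v}z_v(2d_v-k)\;=\;2\Bigl(\sum_{v}z_v d_v-|F_e|\Bigr),
\]
and $\sum_{v}z_v d_v=\sum_{\{i,j\}\in F_e}(z_i+z_j)\ge|F_e|$ since $z$ covers every edge of $H_e$. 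This completes the argument.

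The main obstacle is calibrating the constant. The naive averaging arguments (over a uniformly random source, or over a random perfect matching of unit demands) only deliver $k\,u(e)\ge\tfrac12\sum_{v}d_v$, i.e.\ a factor-$2$ loss — exactly the approximation guarantee of \cite{GuptaKleinbergKumarRastogiYener01}. The step that removes this factor is to not bound the expensive quantity $\sum_v\min\{d_v,k-d_v\}$ against the degree sum $\sum_v d_v$ but against the LP dual of $H_e$: high-degree vertices are precisely those that make a fractional vertex cover cheap \emph{and} those whose contribution $\min\{d_v,k-d_v\}$ to the cost is small, and the inequality $\sum_{v:\,2d_v>k}(2d_v-k)\ge\sum_v z_v(2d_v-k)$ — valid only because the cover may be taken with $z_v\le1$ — is exactly what makes this trade-off tight (e.g.\ for complete bipartite $H_e$).
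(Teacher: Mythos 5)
Your proof is correct. The overall architecture coincides with the paper's: both establish the edgewise inequality $u(e)\geq\frac{1}{k}\sum_{v\in W}y(e,\paths_v)$ for every edge $e$ and then conclude by the same averaging over terminals. Where you differ is in how that edgewise bound is obtained. The paper exhibits one explicit valid demand set $D^e$ (with $d^e_{ij}=\frac{1}{k}(\frac{y(e,\paths_i)}{n(e,\paths_i)}+\frac{y(e,\paths_j)}{n(e,\paths_j)})$ on the pairs whose path uses $e$) whose load on $e$ is exactly $\frac{1}{k}\sum_v y(e,\paths_v)$; verifying validity is a two-line computation. You instead lower-bound $u(e)$ by the fractional matching number of the conflict graph $H_e$, pass to a fractional vertex cover by LP duality, truncate it to $[0,1]$, and prove the purely graph-theoretic inequality $k\,\tau_f(H_e)\geq\sum_v\min\{d_v,k-d_v\}$ by an algebraic manipulation --- in effect you argue on the dual side where the paper argues on the primal side (indeed, the paper's $D^e$ is precisely a fractional matching of $H_e$ of the required value, so the two arguments are dual witnesses for the same LP bound). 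Your route is longer and invokes LP duality, but it isolates a clean standalone statement about arbitrary graphs on $k$ vertices (fractional cover number versus the truncated degree sum), and your closing remark correctly identifies why the naive degree-sum averaging loses a factor of $2$; the paper's route is shorter and entirely explicit. All the individual steps in your argument check out, including the truncation of the cover and the sign analysis of $z_v(2d_v-k)$.
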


\begin{proof}
We first consider some fixed edge $e\in E$.  In order to derive a lower bound on $u(e)$, we define a set of traffic demands $D^e$ by setting
\begin{align*}
  d^e_{ij}=
  \begin{cases}
  	\frac{1}{k}\left(\frac{y(e,\paths_i)}{n(e,\paths_i)}+\frac{y(e,\paths_j)}{n(e, \paths_j)}\right) & \text{if~}e\in P_{ij}~,\\
    0 & \text{if~}e\not\in P_{ij}~.
  \end{cases}
\end{align*}

\begin{claim}\label{cla:1}
  $D^e$ is a set of valid traffic demands.
\end{claim}

\begin{proof}
We need to show that $\sum_{j\in W}d^e_{ij}\leq1$ for each $i\in W$.  Remember that $y(e,\paths_i)=\min\{n(e,\paths_i),k-n(e,\paths_i)\}$.  By definition of $D^e$ we thus get
\begin{align*}
  \sum_{j\in W}d^e_{ij}&=\frac{1}{k}\sum_{j\in W: e\in P_{ij}}\left(\frac{y(e, \paths_i)}{n(e, \paths_i)} + \frac{y(e, \paths_j)}{n(e, \paths_j)}\right)\\ 
&\leq \frac{1}{k} \sum_{j\in W: e\in P_{ij}}\left(\frac{k-n(e,\paths_i)}{n(e,\paths_i)}+\frac{n(e,\paths_j)}{n(e,\paths_j)}\right)%\\ &
=\frac{1}{k}\sum_{j\in W: e\in P_{ij}}\frac{k}{n(e,\paths_i)}=1\enspace.
\end{align*}
The last equation follows since $|\{j\in W\mid e\in P_{ij}\}|=n(e,\paths_i)$.
\end{proof}

\begin{claim}\label{cla:2}
  $u(e)\geq\frac{1}{k}\sum_{i\in W}y(e,\paths_i)$~.
\end{claim}

\begin{proof}
We know from the previous claim that $D^e$ is a valid set of traffic demands. Moreover, by definition, $d^e_{ij}>0$ implies $e\in P_{ij}$. Therefore 
%(with $\binom{W}{2}$ denoting the set of cardinality-two subsets of~$W$)
\begin{align*}
  u(e)&\geq\frac{1}{k}\sum_{\{i,j\}\in\binom{W}{2}: e\in P_{ij}}\left(\frac{y(e, \paths_i)}{n(e, \paths_i)} + \frac{y(e, \paths_j)}{n(e, \paths_j)}\right) %\\ &
=\frac{1}{k}\sum_{i\in W: n(e,\paths_i)>0} y(e,\paths_i) %\\ &
=\frac{1}{k}\sum_{i\in W} y(e,\paths_i)
\end{align*}
since $y(e, \paths_i) = 0$ if $n(e, \paths_i) = 0$.
\end{proof}

\noindent
Since Claim~\ref{cla:2} holds for all $e\in E$, it follows that
\begin{align*}
\sum_{e\in E}c(e)\,u(e)&\geq\sum_{e\in E}c(e)\,\frac{1}{k}\sum_{i\in W}y(e,\paths_i) %\\ &
=\frac{1}{k}\sum_{i\in W}\sum_{e\in E}c(e)\,y(e,\paths_i) %\\ &
\geq\min_{i\in W}~\sum_{e\in E}c(e)\,y(e,\paths_i)\enspace.
\end{align*}
This concludes the proof of the lemma.
\end{proof}
%We point out that our extension lies in the last inequality and in removing the hypothesis that $\paths_r$ has a tree structure.

We can now prove Theorem~\ref{thm:equivalent-conj}.

\begin{proof}[Proof of Theorem~\ref{thm:equivalent-conj}]
  Consider an \vpn instance $(G,c,W,b)$ with~$b_j=1$ for all~$j\in W$; this can be assumed without loss of generality due to Lemma \ref{lem:b=1}. Consider an optimal solution to the instance.  By Lemma~\ref{lem:1} there exists a terminal $i\in W$ such that the optimal solution value of the \crp instance $(G,c,W,i)$ is a lower bound on the optimal solution value of the \vpn instance.  If Conjecture~\ref{con:PR} holds for the \crp instance, there exists an optimal tree solution to the \crp instance which, by Lemma~\ref{lem:tree-solutions}, also yields an optimal virtual private network.
\end{proof}

\section{The Case of Ring Networks}
\label{sec:ring}

In this section we prove Conjecture~\ref{con:PR} for the case that $G$ is an arbitrary ring network, i.e., a cycle. 

\begin{theorem}
\label{theo:PR-ring}
Conjecture~\ref{con:PR} holds true when $G$ is a ring network.
\end{theorem}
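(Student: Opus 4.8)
The plan is to fix a terminal $i\in W$ on the ring $G$ and show that among all solutions $\paths_i$ to the \crp instance $(G,c,W,i)$, there is one of minimum cost in which all paths $P_{ij}$ form a tree. On a ring with vertex set cyclically ordered as $v_0,v_1,\dots,v_{m-1}$, each path $P_{ij}$ connecting $i$ to a terminal $j$ is determined by a single binary choice: go clockwise or counterclockwise around the ring. Thus a solution $\paths_i$ is nothing but an assignment of a direction in $\{\mathrm{cw},\mathrm{ccw}\}$ to each terminal $j\in W\setminus\{i\}$. A \emph{tree} solution corresponds exactly to a ``consecutive'' or ``monotone'' assignment: there is an edge $e^\star$ of the ring such that every path avoids $e^\star$; equivalently, walking away from $i$ clockwise, the terminals served clockwise form a prefix of that walk, and the rest are served counterclockwise.

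First I would set up this parametrization carefully: label the terminals $j$ by their clockwise distance from $i$, and describe an arbitrary solution by the set $A\subseteq W\setminus\{i\}$ of terminals routed clockwise. For an edge $e$ lying on the clockwise arc from $i$, $n(e,\paths_i)$ equals the number of terminals in $A$ that are ``beyond'' $e$ (clockwise) plus nothing else; symmetrically on the counterclockwise side. So the whole objective $\sum_e c(e)\,y(e,\paths_i)$ with $y(e,\paths_i)=\min\{n(e,\paths_i),k-n(e,\paths_i)\}$ becomes an explicit function of $A$. Next I would argue that the objective can only decrease under an \emph{exchange argument}: if $A$ is not a ``prefix'' (i.e.\ there exist terminals $j_1$ closer to $i$ clockwise that is routed counterclockwise while $j_2$ farther clockwise is routed clockwise), swapping the directions of a suitable such pair, or more simply re-sorting $A$ to be a clockwise prefix of appropriate size, does not increase $\sum_e c(e)\,y(e,\paths_i)$. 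The key inequality to verify is that moving one terminal's route from the ``long way'' to the ``short way around a fixed cut edge'' cannot increase any $y(e,\cdot)$ by more than it decreases another; here the concavity of $t\mapsto\min\{t,k-t\}$ is what makes the local exchange work, because the marginal effect of adding one more path through an edge is nonincreasing in the load of that edge.

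The main obstacle I expect is handling the $\min\{n,k-n\}$ ``kink'' at $k/2$ cleanly in the exchange argument: an edge that is currently past the halfway load behaves oppositely to one below it, so a naive swap could increase cost on some edges while decreasing it on others, and one must show the net effect is favorable. I would deal with this by not doing arbitrary swaps but instead comparing an optimal solution $A^\ast$ directly against the family of $m$ candidate tree solutions $\{T_0,\dots,T_{m-1}\}$, where $T_r$ is the tree obtained by deleting ring edge $e_r$; I would show $\min_r \mathrm{cost}(T_r)\le \mathrm{cost}(A^\ast)$ by an averaging or direct-comparison argument, or alternatively show that an optimal $A^\ast$ must already be consecutive by a minimality/uncrossing argument on the set of ``counterclockwise'' terminals. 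Either way, the ring structure reduces everything to one-dimensional reasoning about $\min\{t,k-t\}$, and Lemma~\ref{lem:tree-solutions} then hands back the statement for the \crp problem, completing the proof.
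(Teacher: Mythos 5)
Your setup (parametrizing a solution by the set $A$ of clockwise-routed terminals and characterizing tree solutions as the consecutive assignments) is sound, and you correctly identify the obstacle: a swap that empties the arc between two terminals lowers $n(e)$ by $2$ on the edges of that arc, which \emph{increases} $y(e)=\min\{n(e),k-n(e)\}$ wherever $n(e)>k/2+1$, so neither the naive exchange nor ``re-sorting $A$ into a prefix'' is obviously cost-nonincreasing. The problem is that you never resolve this obstacle: the proposal ends by listing three candidate strategies (averaging over the $m$ candidate trees, an unspecified direct comparison, an uncrossing argument) without carrying out any of them. The averaging option in fact fails: for every edge $e_\ell$ the average of $y(e_\ell,T_r)$ over the $m$ trees is on the order of $k/4$, while an optimal solution can cost as little as the single cheapest tree, which may be far below the average tree (e.g.\ when one ring edge is very expensive); so the needed inequality $\frac{1}{m}\sum_r\mathrm{cost}(T_r)\le\mathrm{cost}(A^\ast)$ is false in general.

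The missing idea is a \emph{pointwise}, edge-by-edge comparison against one well-chosen tree, and it comes from a counting fact your parametrization does not exploit: after contracting non-terminal nodes so that $W=V$, exactly one path of $\paths_i$ ends at each node $j\neq i$, so $n(\cdot,\paths_i)$ changes by exactly $\pm1$ across $j$, and consequently $y(\cdot,\paths_i)$ changes by exactly $\pm1$ across \emph{every} node (including $i$, where the two incident edges satisfy $n(e)+n(f)=k-1$). Since those two edges have $n$-values on opposite sides of $k/2$, a discrete intermediate-value argument produces an edge $e_0$ with $n(e_0,\paths_i)=k/2$ when $k$ is even (the odd case is analogous), hence $y(e_0,\paths_i)=k/2$, and numbering the edges $e_0,\dots,e_{k-1}$ around the ring gives $y(e_\ell,\paths_i)\ge\lvert k/2-\ell\rvert$ for all $\ell$. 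The tree obtained by deleting $e_{k/2}$ attains $y(e_\ell,\cdot)=\lvert k/2-\ell\rvert$ exactly, so it is no more expensive than $\paths_i$ for \emph{any} cost function $c$, which is precisely the direct comparison you were hoping for. Without this pyramid lower bound (or an equivalent substitute) the proof is incomplete.
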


\begin{proof}
Consider a \crp instance $(G,c,W,i)$ where $G$ is a cycle.	 We can assume without loss of generality that each node $v\in V$ is a terminal, i.e., $W=V$: If there is a non-terminal node $v$ with two neighbors $x,y\in V$, we can remove~$v$ and replace edges $xv$ and $vy$ by a new edge $xy$ with $c(xy):=c(xv)+c(vy)$.
	
Let $\paths_i$ be an arbitrary optimal solution to this \crp instance.  

\begin{claim}\label{claim1}
Let $e,f\in E$ be a pair of incident edges with~$i\not\in e\cap f$ (i.e., not both edges are incident to terminal $i$).  Then 
%\begin{align*}
  $n(e,\paths_i)=n(f,\paths_i)\pm1$.
%\end{align*}
\end{claim}

\begin{proof}
  Let $j\in e\cap f$ be the terminal that is incident to both $e$ and $f$.  Since $j\neq i$, exactly one path in $\paths_i$ (namely the $i$-$j$-path) ends in $j$.  All other paths either contain both edges $e$ and $f$ or none of the two.
\end{proof}

\begin{claim}\label{claim2}
Let $e,f\in E$ be an arbitrary pair of incident edges.  Then 
%\begin{align*}
  $y(e,\paths_i)=y(f,\paths_i)\pm1$.
%\end{align*}
\end{claim}

\begin{proof}
  If~$i\not\in e\cap f$, the claim follows immediately from Claim~\ref{claim1}.  We can therefore assume that $e$ and $f$ are the two edges incident to terminal $i$.  Since there are exactly $k-1$ simple paths in $\paths_i$ starting at $i$, we get $n(e,\paths_i)+n(f,\paths_i)=k-1$.  The result follows by definition of $y(e,\paths_i)$ and $y(f,\paths_i)$; see \eqref{eq:y}.
\end{proof}

\noindent
We distinguish two cases.

\textbf{First case:} $k$ is even.  Since $n(e,\paths_i)+n(f,\paths_i)=k-1$ for the two edges $e$ and $f$ incident to $i$, we can assume that $n(e,\paths_i)\geq\frac{k}{2}$ and $n(f,\paths_i)<\frac{k}{2}$.  Going along the cycle, it follows from Claim~\ref{claim1} that there exists an edge~$e_0$ with $n(e_0,\paths_i)=\frac{k}{2}$.  We number the edges along the cycle consecutively $e_0,e_1,\dots,e_{k-1}$. Using Claim~\ref{claim2} inductively, it follows that
\begin{align*}
  y(e_{\ell},\paths_i)\geq |\tfrac{k}{2}-\ell|\qquad\text{for~$\ell=0,\dots,k-1$;}
\end{align*}
see Figure~\ref{fig:cycle}.
\begin{figure}[tb]
	\centering
		\input{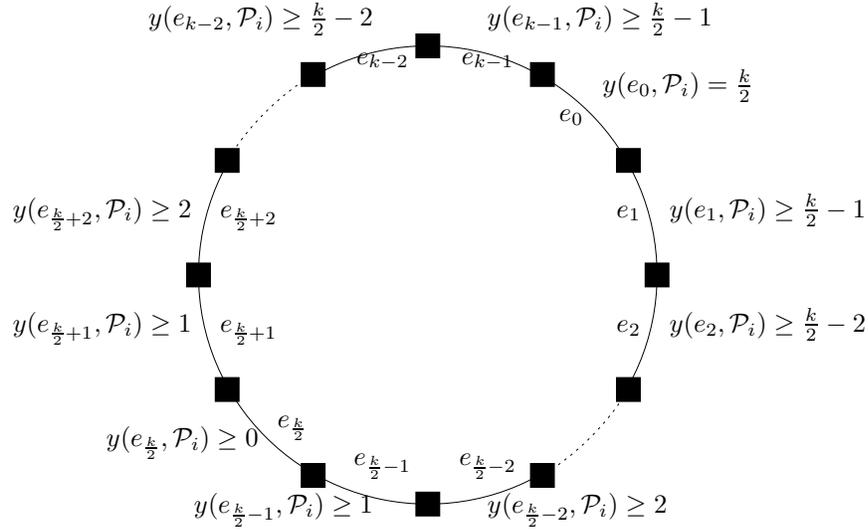}
	\caption{Lower bounding the $y$-values along the cycle.}
	\label{fig:cycle}
\end{figure}

Removing edge $e_{\frac{k}{2}}$ from the cycle yields a tree and a corresponding tree solution $\paths_i'$ to the \crp instance.  Moreover, we argue that
\begin{align*}
  y(e_{\ell},\paths_i')=|\tfrac{k}{2}-\ell|\qquad\text{for~$\ell=0,\dots,k-1$.}
\end{align*}
By Corollary~\ref{cor:PR-trees} we can assume without loss of generality that terminal~$i$ is incident to edge~$e_0$. But in this case $n(e_{\ell},\paths_i')=|\tfrac{k}{2}-\ell|$ and thus $y(e_{\ell},\paths_i')=|\tfrac{k}{2}-\ell|$ for~$\ell=0,\dots,k-1$.

As a result, $\sum_{e\in E}c(e)y(e,\paths_i)\geq\sum_{e\in E}c(e)y(e,\paths_i')$.  Since $\paths_i$ is an optimal solution, the tree solution $\paths_i'$ is optimal as well.

\textbf{Second case:} $k$ is odd.  In this case it follows from Claims~\ref{claim1} and \ref{claim2} that there exist two incident edges that we call~$e_{\frac{1}{2}}$ and $e_{k-\frac{1}{2}}$ with
\begin{align*}
y\left(e_{\frac{1}{2}},\paths_i\right)=y\left(e_{k-\frac{1}{2}},\paths_i\right)=\frac{k-1}{2}\enspace.	
\end{align*}
We number the edges along the cycle consecutively $e_{\frac{1}{2}},e_{\frac{1}{2}+1},\dots,e_{k-\frac{1}{2}}$. Using Claim~\ref{claim2} inductively, it follows that
\begin{align*}
  y(e_{\ell},\paths_i)\geq|\tfrac{k}{2}-\ell| \qquad\text{for~$\ell=\tfrac{1}{2},\dots,k-\tfrac{1}{2}$.}
\end{align*}
The remainder of the proof is identical to the first case.
\end{proof}

Theorem~\ref{thm:equivalent-conj} and Theorem~\ref{theo:PR-ring} imply that the VPN Tree Routing Conjecture is true for ring networks.

\begin{corollary}[\cite{HKS05}]\label{cor:vpn-conj}
  Conjecture~\ref{con:0} holds true if $G$ is a ring network.
\end{corollary}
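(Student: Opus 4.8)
The plan is to deduce the corollary directly by chaining the two main results already established, namely Theorem~\ref{thm:equivalent-conj} and Theorem~\ref{theo:PR-ring}. Theorem~\ref{thm:equivalent-conj} reduces Conjecture~\ref{con:0} to the Pyramidal Routing Conjecture (Conjecture~\ref{con:PR}) on any class of networks that is closed under subdividing edges by nodes of degree~$2$, while Theorem~\ref{theo:PR-ring} establishes Conjecture~\ref{con:PR} precisely for rings. The only ingredient that must be checked before these two can be combined is that the class of ring networks satisfies the closure hypothesis appearing in Theorem~\ref{thm:equivalent-conj}.

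First I would verify this closure property. Subdividing an edge $e=xy$ of a cycle by inserting a new node $v$ of degree~$2$ replaces $e$ by the two edges $xv$ and $vy$, leaving again a single cycle (one edge longer). Hence the class of ring networks is closed under the subdivision operation. This closure is exactly what is needed so that the reduction to unit communication bounds (Lemma~\ref{lem:b=1}), which is used inside the proof of Theorem~\ref{thm:equivalent-conj} and subdivides edges, does not lead out of the class of rings.

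Next I would invoke Theorem~\ref{theo:PR-ring}, which asserts that Conjecture~\ref{con:PR} holds whenever $G$ is a ring network. Combining this with the closure property just confirmed, the hypothesis of Theorem~\ref{thm:equivalent-conj} is met for the class of rings, and that theorem then yields that Conjecture~\ref{con:0} holds on the same class. This is exactly the assertion of the corollary.

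In terms of difficulty, the corollary itself carries essentially no independent content: all of the substance resides in the two theorems it invokes. The only point requiring any care is confirming the closure hypothesis, and even that is immediate since subdividing an edge of a cycle produces another cycle. I would therefore expect the final argument to amount to a single sentence citing Theorem~\ref{thm:equivalent-conj} and Theorem~\ref{theo:PR-ring}, with at most a parenthetical remark that rings remain rings under edge subdivision.
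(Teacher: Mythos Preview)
Your proposal is correct and mirrors the paper's own argument: the corollary is stated immediately after noting that Theorem~\ref{thm:equivalent-conj} and Theorem~\ref{theo:PR-ring} together imply it, with no further proof given. Your explicit check that rings are closed under edge subdivision is the one detail the paper leaves implicit in the hypothesis of Theorem~\ref{thm:equivalent-conj}, and you handle it correctly.
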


%\begin{proof}\label{pf:}
%  Theorem~\ref{theo:PR-ring} and Theorem~\ref{thm:equivalent-conj} yield that Conjecture~\ref{con:0} holds true if $G$ is a ring network and $b(i)=1$ for all $i\in W$.  The result thus follows from Lemma~\ref{lem:b=1}.
%\end{proof}

We conclude the paper with some remarks and observations. First, in the definition of the Pyramidal Routing problem we might relax the constraints that the paths have to be simple and allow \emph{trails}; a trail may visit nodes (but not edges) more than once. It is easy to see that Lemma \ref{lem:tree-solutions}, Lemma \ref{lem:1}, and Theorem \ref{thm:equivalent-conj} still hold.
(Observe that on a ring a path is a trail and vice versa.)

Then, in their paper \cite{HKS05}, Hurkens, Keijsper, and Stougie prove a result that is slightly stronger than Corollary~\ref{cor:vpn-conj}.  Consider the relaxation of the \vpn problem where traffic between each pair of terminals may be routed along several different paths but the fraction of traffic along each of these paths must be fixed.  Using the notation from \cite{HKS05}, we refer to the relaxed problem as the Multipath Routing (\mr) problem.  Hurkens et al.\ show that, for the case of ring networks, even \mr has always an optimal solution which is a tree solution.  We observe that this stronger result holds in general if Conjecture~\ref{con:0} is true.

\begin{observation}
	Consider a class of networks which is closed under the operation of subdividing edges by introducing new nodes of degree~$2$.  If Conjecture~\ref{con:0} holds for this class of networks, then also the \mr problem has optimal solutions which are tree solutions for this class of networks. 
\end{observation}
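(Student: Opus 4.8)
\emph{Goal and reduction to one inequality.}
My plan is to prove that, under Conjecture~\ref{con:0} for the class, on every instance whose network lies in the class a minimum-cost tree solution is already an optimal solution of the \mr problem. First I would isolate what needs to be shown. A tree contains a unique path between any two of its nodes, so a tree solution is simultaneously a feasible \vpn solution and a feasible \mr solution, and in both cases the cheapest feasible capacities are those of Figure~\ref{fig:tree-reservation}; hence a tree solution has the same cost for the two problems. Also, every \vpn solution is an \mr solution (route each pair along its single path), so \mr relaxes \vpn. Writing $\mathrm{opt}_{\mathrm{MR}}$, $\mathrm{opt}_{\mathrm{VPN}}$ and $\mathrm{opt}_{\mathrm{T}}$ for the \mr optimum, the \vpn optimum and the minimum cost of a tree solution of a fixed instance, we get $\mathrm{opt}_{\mathrm{MR}}\le\mathrm{opt}_{\mathrm{VPN}}\le\mathrm{opt}_{\mathrm{T}}$, while Conjecture~\ref{con:0} applied to the instance itself gives $\mathrm{opt}_{\mathrm{VPN}}=\mathrm{opt}_{\mathrm{T}}$. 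So it suffices to prove the reverse inequality $\mathrm{opt}_{\mathrm{MR}}\ge\mathrm{opt}_{\mathrm{T}}$: then all three quantities coincide and any cheapest tree solution is an optimal \mr solution.

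\emph{Passing to a split instance.}
To prove $\mathrm{opt}_{\mathrm{MR}}\ge\mathrm{opt}_{\mathrm{T}}$ I would make the fractional routing of an optimal \mr solution integral by splitting terminals, in the spirit of Section~\ref{sec:preliminaries}. Fix an optimal \mr solution and assume (approximating if necessary) that the fraction of traffic routed along each path is rational; let $N$ be a common denominator. Let $I_N$ be the instance obtained from $(G,c,W,b)$ by replacing each terminal~$i$ with $N\,b(i)$ unit-demand sub-terminals placed on a subdivided incident edge whose new segments have cost~$0$, exactly as in Figure~\ref{fig:splitting}~(iv). Then $I_N$ is obtained from $G$ by subdividing edges, hence lies in the class; and, as in Section~\ref{sec:preliminaries}, $I_N$ has an optimal tree solution containing all the zero-cost sub-terminal edges, which projects to a tree solution of $(G,c,W,b)$ with all reservations multiplied by~$N$, so that $\mathrm{opt}_{\mathrm{T}}(I_N)=N\cdot\mathrm{opt}_{\mathrm{T}}(G,c,W,b)$. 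Applying Conjecture~\ref{con:0} to $I_N$ therefore gives $\mathrm{opt}_{\mathrm{VPN}}(I_N)=\mathrm{opt}_{\mathrm{T}}(I_N)=N\cdot\mathrm{opt}_{\mathrm{T}}(G,c,W,b)$. It now remains to produce a feasible \vpn solution of $I_N$ of cost at most $N\cdot\mathrm{opt}_{\mathrm{MR}}$; combined with the previous sentence this yields $\mathrm{opt}_{\mathrm{T}}(G,c,W,b)\le\mathrm{opt}_{\mathrm{MR}}$ and finishes the argument.

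\emph{The main obstacle.}
Constructing that \vpn solution of $I_N$ is the heart of the proof. For each pair $\{i,j\}$ one must assign to each of the $N\,b(i)\cdot N\,b(j)$ sub-terminal pairs one of the paths used for $\{i,j\}$ by the \mr solution, in such a way that, for every edge~$e$, no valid traffic pattern of $I_N$ can load $e$ by more than $N$ times the load that a worst valid demand puts on $e$ in the \mr solution. The naive choice --- split the $N\,b(i)$ sub-terminals of $i$ into consecutive blocks, one block per \mr path of $\{i,j\}$ of the appropriate relative size, each block routing to all sub-terminals of~$j$ along its path --- does not work: since every sub-terminal carries a whole unit of demand, an adversary can route, along $e$-using paths, from many sub-terminals of~$i$ to sub-terminals of many different destination clusters at once, creating on $e$ a load far exceeding $N$ times its \mr load. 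The assignment therefore has to be made so that the $e$-using routing issued from each cluster~$i$ stays as concentrated as the total amount of traffic the \mr solution routes from~$i$ across $e$ permits, and this has to be arranged for all edges simultaneously. Realizing such a simultaneous assignment (equivalently: bounding, for every $e$ at once, the maximum fractional matching in the ``conflict graph'' of sub-terminal pairs routed through $e$) is the step I expect to be the main difficulty; everything else is bookkeeping along the lines of Section~\ref{sec:preliminaries}.
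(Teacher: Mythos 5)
Your reduction is exactly the one the paper sketches: restrict to rational path fractions with a common denominator $N$, split each terminal $i$ into $N\,b(i)$ unit sub-terminals by subdividing an incident edge so that the new network stays in the class, apply Conjecture~\ref{con:0} to the split instance $I_N$, and use $\mathrm{opt}_{\mathrm{T}}(I_N)=N\cdot\mathrm{opt}_{\mathrm{T}}(G,c,W,b)$. All of that is correct. But you then stop at the one step that carries the whole argument --- producing from an optimal \mr solution a feasible \vpn solution of $I_N$ of cost at most $N\cdot\mathrm{opt}_{\mathrm{MR}}$ --- and declare it an expected difficulty rather than proving it. As a proof the proposal is therefore incomplete: without that step you only recover the trivial inequality $\mathrm{opt}_{\mathrm{MR}}\le\mathrm{opt}_{\mathrm{T}}$, since the map in the opposite direction (from $I_N$ solutions to \mr solutions, by averaging over sub-terminal pairs) yields the wrong inequality. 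To be fair, the paper's own ``sketch of proof'' passes over this point in silence, so you have matched its level of detail while being more honest about where the content lies.

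Your diagnosis of why the step is delicate is accurate, and your instinct that no naive assignment works per edge can be confirmed by a small example. The adversary in $I_N$ may correlate demands at the sub-terminal level: the required capacity of an edge $e$ in $I_N$ is the fractional matching number of the conflict graph $H_e$ of sub-terminal pairs routed through $e$, and this need not be bounded by $N$ times the \mr capacity of $e$. Take three terminals with $b\equiv 1$, $N=2$, and suppose every pair sends half of its traffic across $e$; the \mr capacity of $e$ is $3/4$, so the target is $3/2$. The block assignment produces a conflict graph on the six sub-terminals whose fractional matching number is $2$ (certified by the vertex cover consisting of the two ``first'' sub-terminals of the deciding clusters), while the balanced Latin-square assignment produces two disjoint triangles with fractional matching number $3$. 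Hence any correct completion must either exploit optimality of the \mr solution or control the total cost $\sum_{e}c(e)$ times the fractional matching number of $H_e$ globally rather than edge by edge. Until that is done, the Observation is set up but not proved.
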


\begin{proof}[Sketch of proof.]
  As the $b(i)$ are integers and there is only a finite number of sets of
paths one finds that the fractionalities in the solution to the \mr
problem can be restricted to be rationals with bounded denominators for
every particular instance of the \mr problem. Subdividing the terminals
as in Section 2 by a number of nodes depending on a bound on these
denominators one constructs an \vpn instance whose optimal (tree)
solution yields an optimal (tree) solution to the original instance of
the \mr problem.
\end{proof}
%It is not difficult to observe that any feasible solution to an \mr instance is a feasible solution to a related \vpn instance that is obtained by subdividing terminals as in Section~\ref{sec:preliminaries}, and vice versa.

\paragraph{Acknowledgements:} The authors wish to thank Yuri Faenza for an useful discussion concerning Lemma \ref{lem:b=1}. The third author has been supported by the Italian Research Project Cofin 2006  ``Modelli ed algoritmi per l'ottimizzazione robusta delle reti''. The last author has been supported by the DFG Focus Program 1126, ``Algorithmic Aspects of Large and Complex Networks'', grant SK 58/5-3.

%\bibliographystyle{plain}
%\bibliography{../BibTeX/mybib}

%\begin{thebibliography}{5}
%
%\footnotesize
%\baselineskip = 10pt
%
%\bibitem {GKK01} Gupta A., Kleinberg J., Kumar A., Rastogi R., Yener B. Provisioning a Virtual Private Network: a Network Design Problem for Multicommodity Flow. In {\em Proceedings of STOC}, pages 389-398, 2001.
%
%\bibitem {HKS05}Hurkens C., Keijsper J., Stougie L. Virtual private network
%design: a proof of the tree routing conjecture on ring networks. In {\em
%Proceedings of IPCO}, pages 407-421, 2005.
%
%\end{thebibliography}

\end{document}